\newtheorem{theorem}{Theorem}[section]
\newtheorem{corollary}[theorem]{Corollary}
\newtheorem{lemma}[theorem]{Lemma}
\newtheorem{definition}[theorem]{Definition}
\newtheorem{remark}[theorem]{Remark}
\numberwithin{equation}{section}  
  \newcounter{mnote}
  \let\oldmarginpar\marginpar
    \renewcommand\marginpar[1]{\-\oldmarginpar[\raggedleft\footnotesize #1]%
    {\raggedright\footnotesize #1}}
\definecolor{myblue}{rgb}{0.2,0.2,0.7}
\definecolor{mygreen}{rgb}{0,0.6,0}
\definecolor{mycyan}{rgb}{0,0.6,0.6}
\definecolor{myred}{rgb}{0.9,0.2,0.2}
\definecolor{mymagenta}{rgb}{0.9,0.2,0.9}
\definecolor{mywhite}{rgb}{1.0,1.0,1.0}
\definecolor{myblack}{rgb}{0.0,0.0,0.0}
\newcommand{\beq}{\begin{equation}}
\newcommand{\eeq}{\end{equation}}
\newcommand{\beqa}{\begin{eqnarray}}
\newcommand{\eeqa}{\end{eqnarray}}
\def\bold#1{{\mathbf #1}}
\def\text#1{{\mathrm #1}}
\def\calg#1{{\mathcal #1}}
\def\bbbb#1{{\mathbb #1}}
\begin{document}

\title[Determining Projections and Functionals for Navier-Stokes Equations]
      {Determining Projections and Functionals for \\
       Weak Solutions of the Navier-Stokes Equations}

\author[M. Holst]{Michael Holst}
\email{holst@ama.caltech.edu}
\address{Applied Mathematics 217-50, Caltech, Pasadena, CA 91125, USA.}

\author[E. S. Titi]{Edriss Titi}
\address{Department of Mathematics and Department of Mechanical and Aerospace 
         Engineering, University of California, Irvine, CA 92697-3875, USA.}
\email{etiti@math.uci.edu}

\thanks{The first author was supported in part by the NSF under Cooperative
        Agreement No.~CCR-9120008.
        The work of the second author was supported in part by NSF Grant
        No.~DMS-93-08774 and by the University of California-Irvine
        Graduate Council Research Fund.
        The second author would like to thank the CNLS and the IGPP at the
        Los Alamos National Laboratory for their kind hospitality while this
        work was completed.}

\date{August 7, 1996}


\begin{abstract}
In this paper we prove that an operator which projects weak solutions of
the two- or three-dimensional Navier-Stokes equations onto a 
finite-dimensional space is determining if it annihilates the difference 
of two ``nearby'' weak solutions asymptotically, and if it satisfies a 
single appoximation inequality.
We then apply this result to show that the long-time behavior of weak 
solutions to the Navier-Stokes equations, in both two- and three-dimensions,
is determined by the long-time behavior of a finite set of bounded 
linear functionals.
These functionals are constructed by local surface averages of solutions
over certain simplex volume elements, and are therefore well-defined for 
weak solutions.
Moreover, these functionals define a projection operator which satisfies
the necessary approximation inequality for our theory.
We use the general theory to establish lower bounds on the simplex 
diameters in both two- and three-dimensions. Furthermore, in the three
dimensional case we 
make a connection between their diameters and the
Kolmogoroff dissipation small scale in turbulent flows.
\end{abstract}

\maketitle


{\footnotesize
\tableofcontents
}
\vspace*{-0.5cm}

\section{Introduction}

Consider a viscous incompressible
fluid in $\Omega \subset \bbbb{R}^d$, where $\Omega$ is an open bounded
domain with Lipschitz continuous boundary, and where $d=2$ or $d=3$.
Given the kinematic viscosity $\nu>0$, and the vector volume force 
function $f(x,t)$ for each $x \in \Omega$ and $t \in (0,\infty)$, 
the governing Navier-Stokes equations for the fluid velocity 
vector $u=u(x,t)$ and the scalar pressure field $p=p(x,t)$ are:
\begin{equation}
   \label{eqn:nv}
\frac{\partial u}{\partial t} - \nu \Delta u
    + (u \cdot \nabla) u + \nabla p = f
  \ \ \ \text{in} \ \ \Omega \times (0,\infty),
\end{equation}
\begin{equation}
   \label{eqn:nv_bc}
\nabla \cdot u = 0
  \ \ \ \text{in} \ \ \Omega \times (0,\infty).
\end{equation}
Also provided are initial conditions $u(0) = u_0$, as well as 
appropriate boundary conditions on $\partial \Omega \times (0,\infty)$.

The notion of {\em determining modes} for the Navier-Stokes equations was
first introduced in~\cite{FoPr67} as an attempt to identify and estimate the
number of degrees of freedom in turbulent flows (cf.~\cite{CFMT85} for a
thorough discussion of the role of determining sets in turbulence theory).
This concept later led to the notion of 
{\em Inertial Manifolds}~\cite{FST88}.
An estimate of the number of determining modes was given in~\cite{FMTT83} and
later improved in~\cite{JoTi93}.
The notion of {\em determining nodes}, and other more general determining
concepts, were introduced in~\cite{FoTe83}.
In~\cite{FoTe84} the notion of determining nodes was discussed in detail, 
and estimates for their number were reported in~\cite{JoTi92}, and later 
improved in~\cite{JoTi93}.
In~\cite{FoTi91} (see also~\cite{JoTi91}) the concept of 
{\em determining volume elements} was presented, and a connection was
established between this concept and Inertial Manifolds.
A generalized and unified theory of all of the above
was recently presented in~\cite{CJT95a,CJT97}.

Bounds on the number of determining modes, nodes, and volumes are usually 
phrased in terms of a generalized {\em Grashof number}, which is defined for 
the two-dimensional Navier-Stokes equations as:
$$
Gr = \frac{\rho^2 F}{\nu^2} = \frac{F}{\lambda_1 \nu^2},
$$
where $\lambda_1$ is the smallest eigenvalue of the Stokes operator
and $\rho=\sqrt{\lambda_1}$ is the related (best) Poincar\'e constant.
Here,
$F = \limsup_{t \rightarrow \infty} ( \int_{\Omega} |f(x,t)|^2 )^{1/2}$
if $f \in L^2(\Omega)$ for almost every~$t$, or
$F = \limsup_{t \rightarrow \infty} \sqrt{\lambda_1} \| f \|_{H^{-1}(\Omega)}$
if $f \in H^{-1}(\Omega)$ for almost every~$t$.

The best known estimate for the determining set size for the two-dimensional 
Navier-Stokes equations with periodic boundary conditions and $H^2$-regular 
solutions is of order $Gr$~\cite{JoTi93}.
In obtaining their estimate, the authors relied on the fact that the domain 
had no physical boundaries to shed vorticity, which made available some 
convenient properties of $H^2$-regular solutions.
However, in the two-dimensional case with no-slip boundary conditions, to
our knowledge the best estimate on the cardinal of any determining set
(modes, nodes, or volumes) that can be obtained is of 
order $Gr^2$, even for $H^2$-regular solutions.

Due to the Sobolev Imbedding Theorem $H^2 \hookrightarrow C^0$ (which holds
in dimensions 1, 2, and 3), 
or rather due to the failure of the imbedding 
$H^1 \hookrightarrow C^0$ in dimensions 2 and 3, determining node 
analysis is necessarily restricted to $H^2$-regular solutions to make
sense of point-wise values.
However, when talking about determining modes or volume elements,
it is sufficient for functions to be $H^1$-regular,
so that these concepts also make sense for weaker solutions.
To construct a general analysis framework for the case of weak 
$H^1$ solutions, we can begin by defining notions of
{\em determining projections} and {\em determining functionals}
for weak solutions.
(The standard spaces $H$, $V$, and $V'$ are reviewed fully in~\S2.)
\begin{definition} {\em
   \label{def:determining_operator}
Let $f(t), g(t) \in V'$ be any two forcing functions satisfying
\begin{equation}
    \label{eqn:forcing}
\lim_{t \rightarrow \infty} \| f(t) - g(t) \|_{V'} = 0,
\end{equation}
and let $u, v \in V$ be corresponding weak solutions 
to~(\ref{eqn:nv})--(\ref{eqn:nv_bc}).
The projection operator $R_N : V \mapsto V_N \subset L^2(\Omega)$, 
~ $N = \dim(V_N) < \infty$,
is called a {\em determining projection} for weak solutions of the
$d$-dimensional Navier-Stokes equations if
\begin{equation}
  \label{eqn:project}
\lim_{t \rightarrow \infty} 
    \| R_N( u(t) - v(t)) \|_{L^2(\Omega)} = 0,
\end{equation}
implies that
\begin{equation}
    \label{eqn:determine}
\lim_{t \rightarrow \infty} \| u(t) - v(t) \|_H = 0.
\end{equation}
} \end{definition}

Given a basis $\{ \phi_i \}_{i=1}^N$ for the finite-dimensional space $V_N$, 
and a set of bounded linear functionals $\{ l_i \}_{i=1}^N$ from $V'$, 
we can construct a projection operator as:
\begin{equation}
  \label{eqn:projectOper}
R_N u = \sum_{i=1}^N l_i(u) \phi_i.
\end{equation}
The assumption (\ref{eqn:project}) is then implied by:
\begin{equation}
  \label{eqn:linfunc}
\lim_{t \rightarrow \infty} 
    | l_i( u(t) - v(t)) | = 0, \ \ \ \ \ i=1, \ldots, N
\end{equation}
so that we can ask equivalently whether the set $\{l_i\}_{i=1}^N$ 
forms a set of {\em determining functionals}
(see~\cite{CJT95a,CJT97}).
The analysis of whether $R_N$ or $\{l_i\}_{i=1}^N$ are determining 
can be reduced to an analysis of the approximation properties of $R_N$. 
Note that in this construction,
the basis $\{\phi_i\}_{i=1}^N$ need not span a subspace of the solution 
space $V$, so that the functions $\phi_i$ need not be divergence-free
for example.
Note that Definition~\ref{def:determining_operator}
encompasses each of the notions of determining modes, nodes, and volumes
by making particular choices for the sets of functions 
$\{\phi_i\}_{i=1}^N$ and $\{l_i\}_{i=1}^N$
(see~\cite{JoTi91,JoTi92}).

In this paper, we will employ Definition~\ref{def:determining_operator}
to extend the results of~\cite{CJT95a,CJT97} to the more 
general setting of $H^1$-regular solutions.
In particular, we will show that if a projection operator 
$R_N : V \mapsto V_N \subset L^2(\Omega)$, ~ $N = \dim(V_N) < \infty$,
satisfies an approximation inequality for $\gamma > 0$ of the form,
\begin{equation}
  \label{eqn:approximation}
\| u - R_N u \|_{L^2(\Omega)} \le C_1 N^{-\gamma} \| u \|_{H^1(\Omega)},
\end{equation}
then the operator $R_N$ is a determining projection in the sense of 
Definition~\ref{def:determining_operator}, provided $N$ is large enough.
We will also derive explicit bounds on the dimension $N$ which guarantees that
$R_N$ is determining.
While we gain generality in our approach here, we also lose something in the
balance: the bounds obtained here are generally of order $Gr^2$,
whereas the bounds in~\cite{CJT95a,CJT97} (requiring $H^2$-regularity)
are of order $Gr$.

\subsection*{Outline of the paper}

Preliminary material is presented in~\S2,
including some inequalities for bounding the nonlinear term appearing 
in weak formulations of the Navier-Stokes equations.
In~\S3, a finite element interpolant due to Scott and Zhang is 
presented, which (unlike nodal interpolation) is well-defined for
$H^1$-functions.
It is shown that the interpolant satisfies the approximation 
assumption~(\ref{eqn:approximation}) for $H^1$-functions on arbitrary 
polyhedral domains in both two and three dimensions;
most of the details are relegated to the Appendix.
In~\S4, we consider the two-dimensional Navier-Stokes equations, and 
derive bounds on the dimension $N$ of the space $V_N$, employing only
the approximation assumption~(\ref{eqn:approximation}).
As an application of this general result, we employ some standard 
assumptions about simplex triangulations of the domain
(discussed in~\S3) and derive lower bounds on the simplex diameters,
sufficient to ensure that the SZ-interpolant is a determining projection 
(equivalently, that the simplex surface integrals forming SZ-interpolant 
coefficients are a determining set of linear functionals).
We extend these results to three dimensions in~\S5, 
by requiring (following~\cite{CDT95}) 
that weak solutions satisfy an additional technical assumption
(due to the lack of appropriate global {\em a priori} estimates), which is 
related to the natural notion of mean dissipation rate of energy.

\section{Preliminary Material}

We briefly review some background material following the 
notation of~\cite{CoFo88,Lion69,Tema77,Tema83}.
Let $\Omega \subset \mathbb{R}^d$ denote an open bounded set.
The imbedding results we will need are known to hold for example if the domain 
$\Omega$ has a locally Lipschitz boundary, denoted as 
$\Omega \in \calg{C}^{0,1}$ (cf.~\cite{Adam78}).
For example, open bounded convex sets $\Omega \subset \bbbb{R}^d$ 
satisfy $\Omega \in \calg{C}^{0,1}$ (Corollary~1.2.2.3 in~\cite{Gris85}),
so that convex polyhedral domains (which we consider here)
are in $\calg{C}^{0,1}$.

Let $H^k(\Omega)$ denote the usual Sobolev spaces $W^{k,2}(\Omega)$.
Employing multi-index notation, the distributional partial derivative of 
order $|\alpha|$ is denoted $D^{\alpha}$, so that the 
(integer-order) norms and semi-norms in $H^k(\Omega)$ may be denoted
$$
\| u \|^2_{H^k(\Omega)}
    = \sum_{j=0}^k |\Omega|^{\frac{j-k}{d}} | u |_{H^j(\Omega)}^2,
\ \ \ \ \ \ 
| u |^2_{H^j(\Omega)} 
    = \sum_{| \alpha | = j} \| D^{\alpha} u \|_{L^2(\Omega)},
\ \ \ 
0 \le j \le k,
$$
where $|\Omega|$ represents the measure of $\Omega$.
Fractional order Sobolev spaces and norms may be defined for example
through Fourier transform and extension theorems, or through 
interpolation.
A fundamentally important subspace is the $k=1$ case of
$$
H^k_0(\Omega) = \text{closure~of}~ C_0^{\infty}(\Omega)
        ~\text{in}~ H^k(\Omega),
$$
in which the Poincar\'e Inequality reduces to:
If $\Omega$ is bounded, then 
\begin{equation}
   \label{eqn:poincare}
\| u \|_{L^2(\Omega)} \le \rho(\Omega) | u |_{H^1(\Omega)}, 
\ \ \ \ \ 
\forall u \in H^1_0(\Omega).
\end{equation}

The spaces above extend naturally (cf.~\cite{Tema77}) to product spaces of 
vector functions $u=(u_1,u_2,\ldots,u_d)$, which are denoted with the
same letters but in bold-face; for example,
$\bold{H}^k_0(\Omega) = \left( H^k_0(\Omega) \right)^d$.
The inner-products and norms in these product spaces are extended in the 
natural Euclidean way; the convention here will be to subscript these 
extended vector norms the same as the scalar case.

Define now the space $\calg{V}$ of divergence free 
$\bold{C}^{\infty}$ vector functions
with compact support as
$$
\calg{V} = \left\{ \phi \in \bold{C}_0^{\infty}(\Omega)
            ~|~ \nabla \cdot \phi = 0 \right\}.
$$
The following two subspaces of $\bold{L}^2(\Omega)$ and $\bold{H}^1_0(\Omega)$
are fundamental to the study of the Navier-Stokes equations.
$$
H = \text{closure~of}~ \calg{V} ~\text{in}~ \bold{L}^2(\Omega),
\ \ \ \ \ \ \ \ \ \ 
V = \text{closure~of}~ \calg{V} ~\text{in}~ \bold{H}^1_0(\Omega).
$$
To simplify the notation, it is common (cf.~\cite{CoFo88,Tema77}) to use the 
following notation for inner-products and norms in $H$ and $V$:
\begin{equation}
   \label{eqn:notation}
(u,v) = (u,v)_H,
\ \ \ \ 
|u| = \|u\|_H,
\ \ \ \ 
((u,v)) = (u,v)_V,
\ \ \ \ 
\|u\| = \|u\|_V.
\end{equation}

The Navier-stokes equations~(\ref{eqn:nv})--(\ref{eqn:nv_bc}) are
equivalent to the functional differential equation:
\begin{equation}
\frac{du}{dt} + \nu Au + B(u,u) = f, \ \ \ \ \ u(0) = u_0.
   \label{eqn:strong}
\end{equation}
The Stokes operator $A$ and bilinear form $B$ are defined as
$$
Au = -P \Delta u,
\ \ \ \ \ B(u,v) = P [(u \cdot \nabla) v],
$$
where the operator $P$ is the Leray orthogonal projector,
$P :H_0^1 \mapsto V$ and $P : L^2 \mapsto H$, respectively.

Weak formulations, which we consider shortly, will use the 
bilinear Dirichlet form $((\cdot,\cdot))$ and trilinear form 
$b(\cdot,\cdot,\cdot)$ as:
$$
((u,v)) = (\nabla u, \nabla v),
\ \ \ \ \ b(u,v,w) = (B(u,v),w) = (P ((u \cdot \nabla) v), w).
$$
(Note that thanks to the Poincar\'e inequality~(\ref{eqn:poincare}), the 
form $((\cdot,\cdot))$ is
actually an inner-product on V, and the induced norm 
$\|\cdot\|=((\cdot,\cdot))^{1/2}$ is in fact a norm on V, 
equivalent to the $H^1$-norm.)

{\em A priori} bounds can be derived for the form
$b(\cdot,\cdot,\cdot)$ (cf.~\cite{CoFo88,Lady69,Tema77}).
In particular, if $\Omega \subset \bbbb{R}^d$, then
the trilinear form $b(u,v,w)$ is bounded on $V \times V \times V$ as
follows:
\begin{eqnarray}
d=2: & 
|b(u,v,w)| \le 2^{1/2} \|u\|_{L^2(\Omega)}^{1/2} |u|_{H^1(\Omega)}^{1/2} 
                      |v|_{H^1(\Omega)} 
                     \|w\|_{L^2(\Omega)}^{1/2} |w|_{H^1(\Omega)}^{1/2},
   \label{eqn:lady_2d} \\
d=3: &
|b(u,v,w)| \le 2 \|u\|_{L^2(\Omega)}^{1/4} |u|_{H^1(\Omega)}^{3/4} 
                      |v|_{H^1(\Omega)} 
                     \|w\|_{L^2(\Omega)}^{1/4} |w|_{H^1(\Omega)}^{3/4}.
   \label{lemma:lady_3d}
\end{eqnarray}
Moreover, from H\"older inequalities we have for $d=2$ or $d=3$:
\begin{equation}
   \label{eqn:l_infty}
|b(v,u,v)| \le \|\nabla u\|_{L^{\infty}(\Omega)} \|v\|_{L^2(\Omega)}^2.
\end{equation}

\section{Polynomial interpolation in $\bold{H}^1_0(\Omega)$}

An example of a projection operator which satisfies the approximation
assumption~(\ref{eqn:approximation}) is that used for defining determining 
volumes~\cite{JoTi91}; we examine now powerful alternative operator.
Let $\Omega \subset \bbbb{R}^d$ be a d-dimensional polygon, 
exactly triangulated by (for example) Delaunay triangulation~\cite{Edel87}, 
with quasi-uniform, shape-regular simplices, 
the vertices of which will form a set of $N$ generalized
interpolation points in our analysis.
Note that for quasi-uniform, shape-regular triangulations in
$\bbbb{R}^d$ (see~\cite{Ciar78} for detailed discussions), 
it holds that
\begin{equation}
   \label{eqn:h_and_N}
C_0 |\Omega| h^{-d} \le N \le C_0' |\Omega| h^{-d},
\end{equation}
where $h$ is the maximum of the diameters of the simplices,
and where $C_0$ and $C_0'$ are universal constants, 
independent of both $N$ and $h$.
The parameter $h$ will be referred to as the characteristic parameter, or 
characteristic length scale, of such a quasi-uniform shape-regular mesh.

It should be noted that given some initial triangulation 
satisfying~(\ref{eqn:h_and_N}), repeated bi-section~\cite{Bans91}
or octa-section~\cite{Zhan88} (quadra-section in 2D) of each simplex can be
performed in such a way as to guarantee non-degeneracy asymptotically, in 
that the quasi-uniformity and shape-regularity are preserved.
Therefore, inequality~(\ref{eqn:h_and_N}) can be made to hold, for the
same universal constants, for finer and finer meshes in a nested sequence 
of simplex triangulations.

To properly define a continuous piecewise-linear nodal 
interpolant of a function $u \in H^1(\Omega)$ based on the nodes of a 
triangulation of $\Omega$, the particular function $u$ must 
be bounded point-wise.
This will be true if the function $u$ is continuous in $\Omega$, 
hence uniformly continuous on $\bar{\Omega}$.
One of the Sobolev imbedding results (cf.~\cite{Adam78}) 
states that if $\Omega \subset \bbbb{R}^d$ satisfies 
$\Omega \in \calg{C}^{0,1}$, then for nonnegative real numbers $k$ and $s$ 
it holds that $H^k(\Omega) \hookrightarrow C^s(\bar{\Omega})$,
$k > s + \frac{d}{2}$.
This implies that for $d=1$, the interpolant can be correctly defined,
since $H^1(\Omega)$ is continuously imbedded in $C^0(\bar{\Omega})$.
However, in higher dimensions, 
$H^{1+\alpha}(\Omega) \hookrightarrow C^0(\bar{\Omega})$
only if $\alpha>0$ when $d=2$, or if $\alpha>1/2$ when $d=3$.
While it may be possible to use the nodal interpolant and a regularity
assumption such as $u \in H^{1+\alpha}(\Omega)$ for appropriate $\alpha>0$,
an alternative approach is taken here.

The generalized interpolant due to Scott and Zhang~\cite{ScZh90} 
can be defined for $H^1$-functions in both two and three spatial dimensions.
The SZ-interpolant $I_h$ is constructed from a combination linear interpolation
and local averaging on faces and edges of simplices, and has optimal
approximation properties even in the case of $H^1$-functions.
\begin{lemma}
   \label{lemma:sz_vector}
For the SZ-interpolant of $u\in \bold{H}_0^{1+\alpha}(\Omega)$, 
$\alpha \ge 0$, it holds that
$$
\|u-I_hu\|_{L^2(\Omega)} \le C_1 h^{1+\alpha} | u |_{H^{1+\alpha}(\Omega)}.
$$
\end{lemma}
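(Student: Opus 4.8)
The plan is to reduce the global, vector-valued estimate to local scalar estimates on individual simplices, apply the Bramble--Hilbert lemma there via a scaling argument, and then reassemble by summing with control on the overlap of the local patches. First I would observe that the Scott--Zhang interpolant acts componentwise, so it suffices to prove the bound for scalar $u \in H_0^{1+\alpha}(\Omega)$; the vector estimate then follows by summing the squared component estimates and invoking the Euclidean convention for the product-space norm. I would record the two structural properties of $I_h$ established in~\cite{ScZh90}: the interpolation functionals integrate $u$ against dual basis functions supported on faces or edges of the triangulation, so each is well-defined on $H^1$ without requiring point values; and $I_h$ reproduces continuous piecewise-linear polynomials exactly. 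Because reproduction is only up to degree one, the relevant range is $0 \le \alpha \le 1$ (linear interpolation cannot exceed approximation order two).

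Next, for each simplex $T$ let $S_T$ denote the patch consisting of all simplices sharing with $T$ one of the faces or edges used to define the local functionals attached to the nodes of $T$; by shape-regularity this patch contains a bounded number of elements. Since $I_h$ reproduces linear polynomials, for any linear polynomial $\pi$ we have $u - I_h u = (u-\pi) - I_h(u - \pi)$ on $T$. Mapping to a reference simplex, using the $H^1$-stability of $I_h$ on the patch together with the (fractional) Bramble--Hilbert lemma, and scaling back yields the local estimate
$$
\|u - I_h u\|_{L^2(T)} \le C\, h_T^{1+\alpha}\, |u|_{H^{1+\alpha}(S_T)},
$$
where $h_T$ is the diameter of $T$ and $C$ depends only on the shape-regularity constant and on $\alpha$. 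The boundedness of $I_h$ on $H^1$ over the patch, which drives the scaling step, is precisely the property that fails for the nodal interpolant and motivates the Scott--Zhang construction.

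Finally I would sum the squares of the local estimates over all $T$. Since $h_T \le h$ and, by shape-regularity and quasi-uniformity~(\ref{eqn:h_and_N}), each simplex lies in at most a bounded number $\kappa$ of patches $S_T$, the finite-overlap property gives $\sum_T |u|_{H^{1+\alpha}(S_T)}^2 \le \kappa\, |u|_{H^{1+\alpha}(\Omega)}^2$. Taking square roots produces the claimed global bound with $C_1 = C\,\kappa^{1/2}$.

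The main obstacle is the fractional case $0 < \alpha < 1$. The scaling and Bramble--Hilbert arguments are cleanest for integer smoothness, so for fractional $\alpha$ one must either invoke a fractional-order Bramble--Hilbert estimate directly or interpolate between the endpoint estimates $\alpha = 0$ and $\alpha = 1$. Moreover, the Gagliardo seminorm is nonlocal, so $\sum_T |u|_{H^{1+\alpha}(S_T)}^2$ does not split across patches as transparently as in the integer case; one controls it using the finite-overlap property applied to the double-integral representation of the fractional seminorm. I expect these fractional-interpolation technicalities are exactly what the authors defer to the Appendix.
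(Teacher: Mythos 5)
Your proposal is correct and follows essentially the same route as the paper's Appendix proof: componentwise reduction to the scalar case, local estimates over element patches obtained from polynomial reproduction combined with the Scott--Zhang stability bound and the Dupont--Scott (modified Bramble--Hilbert) lemma, and reassembly by summing squares with the finite-overlap property of the patches. The only point worth noting is that the paper resolves the fractional case $0<\alpha<1$ by the second of your two suggested routes---norm interpolation between the integer-order estimates, carried out globally between $L^2(\Omega)$ and $H^2(\Omega)$---which sidesteps the nonlocality of the fractional seminorm that you flag as the main technical obstacle.
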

\begin{proof}
See the appendix for a condensed proof following~\cite{BrSc94,ScZh90}.
\end{proof}

Note that both the usual nodal interpolant and the 
SZ-interpolant $I_h$ can be written as a linear combination of linear 
functionals:
$$
I_h u(x) = \sum_{i=1}^N \phi_i(x) l_i(u).
$$
In either case, the set of functions $\{\phi_i\}_{i=1}^N$ is the usual 
continuous piecewise-polynomial nodal finite element basis defined over the 
simplicial mesh, satisfying the {\em Lagrange} property at the vertices of the 
mesh:
$$
\phi_i(x_j) = \delta_{ij}.
$$
The difference between the two interpolants is simply the choice of the
linear functionals: in the case of the nodal interpolant, the 
functionals are delta functions centered at the vertices of the mesh; 
in the case of the SZ-interpolant, they are defined in terms of a 
bi-orthogonal dual basis (see the Appendix).

\section{The Two-dimensional Navier-Stokes Equations}

A general weak formulation of the Navier-Stokes 
equations~(\ref{eqn:nv})--(\ref{eqn:nv_bc}) can be
written as~(cf.~\cite{CoFo88,Tema77}):
\begin{definition} {\em
Given $f \in L^2([0,T];V')$, a weak solution of the Navier-Stokes equations
satisfies $u \in L^2([0,T];V) \cap C_w([0,T];H)$, 
$du/dt \in L^1_{\text{loc}}((0,T];V')$, and
\begin{equation}
   \label{eqn:weak}
< \frac{du}{dt} ,v >
    + \nu ((u,v)) + b(u,u,v) = <f,v>, 
      \ \ \forall v \in V, 
      \ \ \text{~for~almost~every~} t,
\end{equation}
\begin{equation}
   \label{eqn:weak_ic}
u(0) = u_0.
\end{equation}
} \end{definition}
Here, the space $C_w([0,T];H)$ is the subspace of $L^{\infty}([0,T];H)$ of
weakly continuous functions, and $<\cdot,\cdot>$ denotes the duality pairing
between $V$ and $V'$, where $H$ is the Riesz-identified pivot space in the
Gelfand triple $V \subset H=H' \subset V'$.
Note that since the Stokes operator can be uniquely extended 
to $A : V \mapsto V'$, and since it can be shown that
$B : V \times V \mapsto V'$ (cf.~\cite{CoFo88,Tema83} for both results),
the functional form~(\ref{eqn:strong}) still makes sense for weak 
solutions, and the total operator represents a mapping $V \mapsto V'$.

In the two-dimensional case, for a forcing function 
$f \in L^{\infty}([0,T];V')$, there exists a unique weak solution 
$u \in L^2([0,T];V) \cap C_w([0,T];H)$ (cf.~\cite{CoFo88,Tema83}).
Consider now two forcing functions $f, g \in L^2([0,\infty];V')$ and
corresponding weak solutions $u$ and $v$ to~(\ref{eqn:strong}) 
in either the two- or three-dimensional case.
Subtracting the equations~(\ref{eqn:strong}) for $u$ and $v$ yields
an equation for the difference function $w=u-v$, namely
\begin{equation}
  \label{eqn:above}
\frac{dw}{dt} + \nu A w + B(u,u) - B(v,v) = f-g.
\end{equation}
Since the residual of equation~(\ref{eqn:above}) lies in the dual space $V'$,
for almost every $t$, 
we can consider the dual pairing of each side~(\ref{eqn:above}) with a function 
in $V$, and in particular with $w \in V$, which yields
$$
<\frac{dw}{dt}, w>
    + \nu \|w\|^2 + b(u,u,w) - b(v,v,w) = <f-g,w> \ \ \text{~for~almost~every~} t.
$$
It can be shown (cf.~\cite{Tema77}, Chapter~3, Lemma~1.2) that
$$
\frac{1}{2} \frac{d}{dt} |w|^2 = <\frac{dw}{dt}, w>
$$
in the distribution sense.
It can also be shown~\cite{CoFo88,Tema77} that $b(u,v,w) = -b(u,w,v)$, 
$\forall u,v,w \in V$, so that $b(w,u,w) = b(u,u,w) - b(v,v,w)$.
Therefore, the function $w=u-v$ must satisfy
\begin{equation}
   \label{eqn:weak_diff}
\frac{1}{2}\frac{d}{dt} |w|^2
   + \nu \|w\|^2 + b(w,u,w) = <f-g,w>.
\end{equation}

The following generalized Gronwall inequality will be a key tool in the 
analysis to follow (see~\cite{FMTT83} and~\cite{JoTi91}).
\begin{lemma}
   \label{lemma:gronwall_2}
Let $T>0$ be fixed, and let $\alpha(t)$ and $\beta(t)$ be locally integrable 
and real-valued on $(0,\infty)$, satisfying:
$$
\liminf_{t \rightarrow \infty}
    \frac{1}{T} \int_{t}^{t+T} \alpha(\tau) d\tau = m>0,
\ \ \ \ \ 
\limsup_{t \rightarrow \infty}
    \frac{1}{T} \int_{t}^{t+T} \alpha^-(\tau) d\tau = M<\infty,
$$
$$
\lim_{t \rightarrow \infty}
     \frac{1}{T} \int_{t}^{t+T} \beta^+(\tau) d\tau = 0,
$$
where $\alpha^-=\max\{-\alpha,0\}$ and $\beta^+=\max\{\beta,0\}$.
If $y(t)$ is an absolutely continuous non-negative function on
$(0,\infty)$, and $y(t)$ satisfies the following differential inequality:
$$
y'(t) + \alpha(t) y(t) \le \beta(t), \ \ \text{a.e.~on}~ (0,\infty),
$$
then $\lim_{t \rightarrow \infty} y(t) = 0$.
\end{lemma}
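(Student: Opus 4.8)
The plan is to solve the differential inequality explicitly by an integrating factor and then read off the decay from the three window-average hypotheses on $\alpha$ and $\beta$. Since $\alpha$ is locally integrable, the function $\Phi(t) = \exp\left( \int_{t_0}^t \alpha(\tau)\,d\tau \right)$ is absolutely continuous on compact intervals for any fixed reference time $t_0$, and $\Phi y$ is absolutely continuous; multiplying $y'(t)+\alpha(t)y(t) \le \beta(t)$ by $\Phi(t)$ recasts the left-hand side as $(\Phi y)'(t) \le \Phi(t)\beta(t)$, and integrating from $s$ to $t$ gives the representation
\[
y(t) \le y(s)\exp\left( -\int_s^t \alpha(\sigma)\,d\sigma \right)
  + \int_s^t \exp\left( -\int_\tau^t \alpha(\sigma)\,d\sigma \right)\beta(\tau)\,d\tau.
\]
Because the exponential weights are strictly positive, replacing $\beta$ by $\beta^+$ in the second integral only enlarges it, so it suffices to control the homogeneous term and the forcing term with $\beta^+$ in place of $\beta$.

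The crucial estimate is a uniform exponential decay bound for the integrating factor. Using $\liminf_{t\to\infty}\frac1T\int_t^{t+T}\alpha\,d\tau = m>0$, I would fix $t_0$ so that every window average of length $T$ beyond $t_0$ exceeds $m/2$. Partitioning $[\tau,t]$ into consecutive full windows of length $T$ plus a single short remainder then yields $\int_\tau^t \alpha\,d\sigma \ge (t-\tau-T)\frac{m}{2} - \int_\tau^{\tau+T}\alpha^-\,d\sigma$ for $\tau \ge t_0$. Here the hypothesis $\limsup_{t\to\infty}\frac1T\int_t^{t+T}\alpha^-\,d\tau = M<\infty$ is exactly what bounds the boundary remainder: it gives $\int_\tau^{\tau+T}\alpha^-\,d\sigma \le T(M+1)$ for $\tau$ large, so that
\[
\exp\left( -\int_\tau^t \alpha(\sigma)\,d\sigma \right)
  \le K\,\exp\left( -\tfrac{m}{2}(t-\tau) \right),
  \qquad K = \exp\left( \tfrac{m}{2}T + (M+1)T \right),
\]
uniformly for $t \ge \tau \ge t_0$. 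Applied with $\tau=s$, this makes the homogeneous term decay like $e^{-(m/2)(t-s)}$ for fixed $s$, hence vanish as $t\to\infty$.

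For the forcing term I would combine the same decay bound with the hypothesis on $\beta^+$. Fixing $\varepsilon>0$, choose $s \ge t_0$ so large that $\frac1T\int_t^{t+T}\beta^+\,d\tau < \varepsilon$ for all $t \ge s$. Decomposing $[s,t]$ into the backward windows $[t-(j+1)T,\,t-jT]$ and using $\exp(-\tfrac{m}{2}(t-\tau)) \le \exp(-\tfrac{m}{2}jT)$ together with $\int\beta^+ \le T\varepsilon$ on the $j$-th window, the forcing integral is dominated by a geometric series:
\[
\int_s^t \exp\left( -\int_\tau^t \alpha \right)\beta^+(\tau)\,d\tau
  \le \sum_{j\ge 0} K\,e^{-\frac{m}{2}jT}\,T\varepsilon
   = \frac{KT}{1 - e^{-\frac{m}{2}T}}\,\varepsilon.
\]
Adding the two bounds gives $\limsup_{t\to\infty} y(t) \le C\varepsilon$ with $C$ independent of $\varepsilon$, and letting $\varepsilon\to 0$ yields $\lim_{t\to\infty} y(t)=0$.

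I expect the main obstacle to be the uniform decay bound for the integrating factor over arbitrary intervals $[\tau,t]$. The $\liminf$ hypothesis only controls averages over full windows of length $T$, so one must carefully absorb the unavoidable partial-window contributions, in particular intervals on which $\alpha$ may be large and negative; this is precisely where the finiteness $M<\infty$ of the $\alpha^-$ average is essential, as it caps the growth of $\Phi$ over any single window by the fixed factor $K$. Once this uniform estimate is secured and the windows are aligned so that the geometric decay survives, the remaining manipulations are routine.
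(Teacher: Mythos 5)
Your proof is correct. Note that the paper itself does not prove this lemma: it is quoted as a known tool, with the proof deferred to \cite{FMTT83} and \cite{JoTi91}, so there is no internal argument to compare against. Your argument is essentially the classical one from those references, organized slightly differently: you write down the global variation-of-constants representation once and prove a uniform exponential bound $\exp\bigl(-\int_\tau^t\alpha\bigr)\le K e^{-\frac{m}{2}(t-\tau)}$ for all $t\ge\tau\ge t_0$, whereas the standard presentations iterate the integrating-factor estimate window by window, obtaining a per-window contraction $y(t+T)\le e^{-\frac{m}{2}T}y(t)+C\varepsilon$ and summing the resulting geometric series; the two mechanisms are the same. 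The delicate points are handled correctly in your write-up: the liminf hypothesis only controls full windows anchored at points beyond $t_0$, and your inequality $\int_\tau^t\alpha\,d\sigma\ge(t-\tau-T)\frac{m}{2}-\int_\tau^{\tau+T}\alpha^-\,d\sigma$ is valid provided the short remainder is placed at the left end of $[\tau,t]$ (so that every full window starts at a point $\ge t_0$), which is exactly where the hypothesis $M<\infty$ enters to cap the remainder by $T(M+1)$; similarly, the leftover partial window in the $\beta^+$ sum is absorbed as one extra term of the geometric series since $\beta^+\ge 0$. One could add that the constant in $\limsup_{t\to\infty}y(t)\le \frac{KT}{1-e^{-mT/2}}\,\varepsilon$ depends only on $m$, $M$, $T$, not on $\varepsilon$ or $s$, which is what lets $\varepsilon\to 0$ finish the proof.
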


The main two-dimensional results are now given;
we assume that $\Omega \subset \bbbb{R}^2$ is 
an open bounded domain with Lipschitz continuous boundary.
\begin{theorem}
   \label{theo:main1}
Let $f(t), g(t) \in V'$ be any two forcing functions satisfying
$$
\lim_{t \rightarrow \infty} \| f(t) - g(t) \|_{V'} = 0,
$$
and let $u, v \in V$ be the corresponding weak solutions 
to~(\ref{eqn:nv})--(\ref{eqn:nv_bc}) for $d=2$.
If there exists a projection operator 
$R_N : V \mapsto V_N \subset L^2(\Omega)$, $N = \dim(V_N)$,
satisfying
$$
\lim_{t \rightarrow \infty} 
    \| R_N( u(t) - v(t)) \|_{L^2(\Omega)} = 0,
$$
and satisfying for $\gamma>0$ the approximation inequality
$$
\| u - R_N u \|_{L^2(\Omega)} \le C_1 N^{-\gamma} \| u \|_{H^1(\Omega)},
$$
then
$$
\lim_{t \rightarrow \infty} | u(t) - v(t) | = 0
$$
holds if $N$ is such that
$$
\infty > N > C \left( \frac{1}{\nu^2} 
     \limsup_{t \rightarrow \infty} \|f(t)\|_{V'}
     \right)^{\frac{1}{\gamma}},
$$
where $C$ is a constant independent of $\nu$ and $f$.
\end{theorem}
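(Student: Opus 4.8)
The plan is to reduce the vector statement to a scalar differential inequality for $y(t) = |u(t)-v(t)|^2 = |w(t)|^2$ and then invoke the generalized Gronwall inequality, Lemma~\ref{lemma:gronwall_2}. Starting from the energy identity~(\ref{eqn:weak_diff}) for $w=u-v$, I would first estimate the two problematic terms on its right-hand side. For the trilinear term I would apply the two-dimensional bound~(\ref{eqn:lady_2d}) directly to $b(w,u,w)$ (with the middle argument $u$) to get $|b(w,u,w)| \le 2^{1/2}|w|\,\|w\|\,\|u\|$, and then Young's inequality to split off a controllable multiple of the viscous term, $|b(w,u,w)| \le \tfrac{\nu}{2}\|w\|^2 + \nu^{-1}\|u\|^2 y$. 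The decisive feature is that the ``bad'' factor multiplies $y=|w|^2$ (the $H$-norm) rather than $\|w\|^2$ (the $V$-norm); this is what eventually produces the sharp, $Gr$-linear threshold. The forcing term is handled by $|\langle f-g,w\rangle|\le \tfrac{\nu}{4}\|w\|^2 + \nu^{-1}\|f-g\|_{V'}^2$, so that after these two steps~(\ref{eqn:weak_diff}) becomes $\tfrac12 y' + \tfrac{\nu}{4}\|w\|^2 \le \nu^{-1}\|u\|^2 y + \nu^{-1}\|f-g\|_{V'}^2$.

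The heart of the argument is to convert the surviving viscous term $\tfrac{\nu}{4}\|w\|^2$ into a genuinely dissipative term proportional to $y$, and this is exactly where the hypotheses on $R_N$ enter. Writing $w = R_Nw + (w-R_Nw)$, applying the approximation inequality~(\ref{eqn:approximation}) to $w$, and using the Poincar\'e inequality~(\ref{eqn:poincare}) to replace the full $H^1$-norm by the equivalent norm $\|\cdot\|$, I would obtain $|w|^2 \le 2\|R_Nw\|_{L^2}^2 + 2C_1^2(\rho^2+1)N^{-2\gamma}\|w\|^2$, hence the lower bound $\tfrac{\nu}{4}\|w\|^2 \ge \sigma\big(y - 2\|R_Nw\|_{L^2}^2\big)$ with $\sigma = \nu N^{2\gamma}/\big(8C_1^2(\rho^2+1)\big)$. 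Substituting and rearranging yields a differential inequality of the exact form required by Lemma~\ref{lemma:gronwall_2}, namely $y' + \alpha(t)y \le \beta(t)$ with
\[
\alpha(t) = 2\sigma - \tfrac{2}{\nu}\|u(t)\|^2, \qquad \beta(t) = 4\sigma\|R_Nw(t)\|_{L^2}^2 + \tfrac{2}{\nu}\|f(t)-g(t)\|_{V'}^2.
\]
Since $\sigma$, and therefore the mean of the dissipative coefficient, grows like $N^{2\gamma}$, enlarging $N$ is precisely what makes $\alpha$ positive on average.

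It then remains to verify the three hypotheses of Lemma~\ref{lemma:gronwall_2}. The condition on $\beta^+$ is immediate: $\beta\ge 0$, and since $\|R_Nw(t)\|_{L^2}\to 0$ (the determining-projection hypothesis~(\ref{eqn:project})) and $\|f(t)-g(t)\|_{V'}\to 0$ pointwise, their sliding time-averages tend to $0$. For the two conditions on $\alpha$ I would invoke the standard two-dimensional \emph{a priori} estimates: integrating the energy inequality for $u$ gives $\limsup_{t\to\infty}\tfrac1T\int_t^{t+T}\|u\|^2\,d\tau \le \nu^{-2}F^2$, with $F=\limsup_{t\to\infty}\|f\|_{V'}$, once $T$ is taken large enough to absorb the transient term $|u(t)|^2/(\nu T)$ (controlled by $\limsup|u(t)|^2 \le F^2/(\nu^2\lambda_1)$). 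Consequently $\limsup_t\tfrac1T\int_t^{t+T}\alpha^-\,d\tau<\infty$, and the crucial mean condition $\liminf_t\tfrac1T\int_t^{t+T}\alpha\,d\tau = 2\sigma - \tfrac{2}{\nu}\limsup_t\tfrac1T\int\|u\|^2 > 0$ holds as soon as $\sigma > \nu^{-3}F^2$, i.e.\ $N^{\gamma} > C\,F/\nu^2$ with $C$ depending only on $C_1,\rho,\gamma$. This is exactly the stated threshold, and Lemma~\ref{lemma:gronwall_2} then forces $y(t)\to 0$, which is the conclusion.

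I expect the main obstacle to be the bookkeeping of constants so that the \emph{a priori} bound on the time-averaged enstrophy $\tfrac1T\int\|u\|^2$ feeds through to give the linear-in-$Gr$ threshold $N^{\gamma}\sim F/\nu^2$ rather than a quadratic one; this hinges on choosing the nonlinear estimate that isolates the single power $\nu^{-1}\|u\|^2 y$ instead of a cruder split. A secondary technical point is justifying the scalar inequality for genuinely weak $H^1$-solutions, in particular that the identity~(\ref{eqn:weak_diff}) holds and that $y=|w|^2$ is absolutely continuous; in two dimensions this is available from the uniqueness and regularity theory, but it is precisely the difficulty that will force the extra hypothesis in the three-dimensional case of~\S5.
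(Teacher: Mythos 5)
Your proposal is correct and follows essentially the same route as the paper's own proof: starting from the energy identity~(\ref{eqn:weak_diff}), estimating $b(w,u,w)$ via the two-dimensional bound~(\ref{eqn:lady_2d}) plus Young's inequality, using the approximation inequality to bound the viscous term from below by a dissipative multiple of $|w|^2$ minus a $\|R_N w\|_{L^2(\Omega)}^2$ term, and then applying Lemma~\ref{lemma:gronwall_2} together with the time-averaged enstrophy bound $\limsup_{t\to\infty}\frac{1}{T}\int_t^{t+T}\|u\|^2\,d\tau \le \frac{2}{\nu^2}\limsup_{t\to\infty}\|f\|_{V'}^2$ to obtain the threshold $N^\gamma > C\,\nu^{-2}\limsup_{t\to\infty}\|f\|_{V'}$. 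The only deviations are cosmetic constant bookkeeping (you carry the Poincar\'e constant explicitly where the paper absorbs the norm equivalence into $C_1$, and you split the Young terms as $\nu/2+\nu/4$ rather than $\nu/4+\nu/4$), which leave the stated conclusion unchanged.
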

\begin{proof}
Using the notation~(\ref{eqn:notation}), we begin with 
equation~(\ref{eqn:weak_diff}), employing the inequality~(\ref{eqn:lady_2d})
along with Cauchy-Schwarz and Young's inequalities to yield 
$$
\frac{1}{2} \frac{d}{dt} |w|^2 + \nu \| w \|^2
    \le \|u\|
     ~|w|~ \| w \|
    + \|f-g\|_{V'} \|w\|
$$
$$
\le \frac{1}{\nu} \|u\|^2 |w|^2
  + \frac{1}{\nu} \|f-g\|_{V'}^2
  + \frac{\nu}{2} \|w\|^2.
$$
Equivalently, this is
$$
\frac{d}{dt} |w|^2 + \nu \| w \|^2
 -  \frac{2}{\nu} \|u\|^2 |w|^2
\le \frac{2}{\nu} \|f-g\|_{V'}^2.
$$
To bound the second term on the left from below, we employ 
the approximation assumption on $R_N$, or rather the following inequality
which follows from it:
$$
|w|^2 \le 2 N^{-2\gamma} C_1^2 \|w\|^2 + 2 \|R_N w\|_{L^2(\Omega)}^2,
$$
which yields
$$
\frac{d}{dt} |w|^2 
  +  \left( \frac{\nu N^{2\gamma}}{2 C_1^2} 
  -   \frac{2}{\nu} \|u\|^2 \right)
      |w|^2 
\le \frac{2}{\nu} \|f-g\|_{V'}^2
+ \frac{\nu N^{2\gamma}}{C_1^2} \|R_N w\|_{L^2(\Omega)}^2.
$$
This is of the form
$$
\frac{d}{dt} |w|^2 + \alpha |w|^2 \le \beta,
$$
with obvious definition of $\alpha$ and $\beta$.

The generalized Gronwall Lemma~\ref{lemma:gronwall_2} can now be applied.
Recall that both $\|f-g\|_{V'} \rightarrow 0$ and
$\|R_N w\|_{L^2(\Omega)} \rightarrow 0$ 
as $t \rightarrow \infty$ by assumption.
Since it is assumed that $u$ and $v$, and hence $w$, are in $V$,
so that all other terms appearing in $\alpha$ and $\beta$ remain bounded,
it must hold that
$$
\lim_{t \rightarrow \infty}
     \frac{1}{T} \int_{t}^{t+T} \beta^+(\tau) d\tau = 0,
\ \ \ \ \ \ \ \ 
\limsup_{t \rightarrow \infty}
    \frac{1}{T} \int_{t}^{t+T} \alpha^-(\tau) d\tau <\infty.
$$
It remains to verify that for some fixed $T>0$,
$$
\limsup_{t \rightarrow \infty}
    \frac{1}{T} \int_{t}^{t+T} \alpha(\tau) d\tau >0.
$$
This means we must verify the following inequality for some fixed $T > 0$:
\begin{equation}
   \label{eqn:key}
N^{2\gamma}
> 
\frac{2 C_1^2}{\nu} \left( \limsup_{t \rightarrow \infty}
\frac{1}{T} \int_{t}^{t+T} \frac{2 \|u\|^2}{\nu} d\tau
\right)
= \frac{4 C_1^2}{\nu^2} 
\limsup_{t \rightarrow \infty} \frac{1}{T}
      \int_{t}^{t+T} \|u\|^2 d\tau.
\end{equation}
The following {\em a priori} bound on any weak solution 
can be shown to hold (this is a simple generalization to
$f \in V'$ of the bound 
in~\cite{CoFo88} for $f \in H$):
$$
\limsup_{t \rightarrow \infty} \frac{1}{T} \int_{t}^{t+T} 
   \|u(\tau)\|^2 d\tau \le \frac{2}{\nu^2}
      \limsup_{t \rightarrow \infty} \|f(t)\|_{V'}^2,
$$
for $T = \rho^2/\nu > 0$,
where $\rho$ is the best constant from the Poincar\'e 
inequality~(\ref{eqn:poincare}).
Therefore, if
\begin{equation}
N^{2\gamma}
> 8 C_1^2 \left( 
  \frac{1}{\nu^2} 
   \limsup_{t \rightarrow \infty} \|f(t)\|_{V'}
\right)^2
\ge \frac{4 C_1^2}{\nu^2}
   \left( \frac{2}{\nu^2}
  \limsup_{t \rightarrow \infty} \|f(t)\|_{V'}^2
\right),
\end{equation}
implying that~(\ref{eqn:key}) holds, then by the Gronwall 
Lemma~\ref{lemma:gronwall_2}, it follows that
$$
\lim_{t \rightarrow \infty} |w(t)|
   = \lim_{t \rightarrow \infty} | u(t) - v(t) | = 0.
$$
\end{proof}

Assume now that $\Omega \subset \bbbb{R}^2$ is also polyhedral, 
and can be exactly triangulated with a quasi-uniform, 
shape-regular set of simplices of maximal diameter 
$h = O (N^{-1/2})$, where $N$ is the number of vertices in the
triangulation (see~\S3).
As an application of the general result above, 
we establish a lower bound on the simplex diameters of such a triangulation,
which ensures that the SZ-interpolant is a determining projection 
(equivalently, that the simplex surface integrals forming SZ-interpolant 
coefficients are a determining set of linear functionals).
\begin{corollary}
The SZ-interpolant is determining for the two-dimensional
Navier-Stokes equations if the diameter $h$ of the simplices is 
small enough so that
$$
\infty > h^{-2} > C \left( \frac{1}{\nu^2} 
     \limsup_{t \rightarrow \infty} \|f(t)\|_{V'}
     \right)^2.
$$
\end{corollary}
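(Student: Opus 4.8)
The plan is to apply Theorem~\ref{theo:main1} directly, taking $R_N$ to be the Scott--Zhang interpolant $I_h$ associated with the given quasi-uniform, shape-regular triangulation, and then to translate the resulting lower bound on the dimension $N$ into the stated lower bound on $h^{-2}$. First I would pin down the exponent $\gamma$ in the approximation inequality~(\ref{eqn:approximation}). By Lemma~\ref{lemma:sz_vector} with $\alpha=0$, the SZ-interpolant satisfies
$$
\|u - I_h u\|_{L^2(\Omega)} \le C_1 h\, |u|_{H^1(\Omega)} \le C_1 h\, \|u\|_{H^1(\Omega)}.
$$
Since the triangulation is quasi-uniform and shape-regular in $\bbbb{R}^2$, relation~(\ref{eqn:h_and_N}) with $d=2$ gives $N \asymp |\Omega| h^{-2}$; in particular the upper estimate $N \le C_0' |\Omega| h^{-2}$ yields $h \le (C_0'|\Omega|)^{1/2} N^{-1/2}$. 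Substituting this produces
$$
\|u - I_h u\|_{L^2(\Omega)} \le \tilde C_1 N^{-1/2} \|u\|_{H^1(\Omega)},
$$
so that~(\ref{eqn:approximation}) holds with $\gamma = \tfrac12$ and a constant $\tilde C_1$ depending only on $C_1$, $|\Omega|$, and the shape-regularity constants.

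With $\gamma = \tfrac12$ in hand, I would invoke Theorem~\ref{theo:main1}: whenever $\lim_{t\to\infty}\|I_h(u-v)\|_{L^2(\Omega)} = 0$ and
$$
N > C\left(\frac{1}{\nu^2}\limsup_{t\to\infty}\|f(t)\|_{V'}\right)^{1/\gamma}
  = C\left(\frac{1}{\nu^2}\limsup_{t\to\infty}\|f(t)\|_{V'}\right)^{2},
$$
it follows that $\lim_{t\to\infty}|u(t)-v(t)| = 0$. This is exactly the assertion that $I_h$ is a determining projection in the sense of Definition~\ref{def:determining_operator}, once $N$ exceeds the indicated threshold.

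Finally I would convert this dimension condition into the diameter condition of the statement, again using~(\ref{eqn:h_and_N}). Here the \emph{lower} estimate $N \ge C_0 |\Omega| h^{-2}$ is the relevant one: if $h^{-2} > C(\tfrac{1}{\nu^2}\limsup_t\|f(t)\|_{V'})^{2}$, then $N \ge C_0|\Omega| h^{-2}$ already exceeds the threshold of Theorem~\ref{theo:main1}, provided the single constant $C$ is chosen to absorb $C_0$, $C_0'$, $|\Omega|$, and the threshold constant of the theorem. The upper bound $h^{-2} < \infty$ is automatic and merely records that $h$ is a positive mesh parameter. I expect the only delicate point to be the bookkeeping of constants in passing between $h$ and $N$: one must use the upper estimate in~(\ref{eqn:h_and_N}) to obtain $\gamma=\tfrac12$ with a clean constant and the lower estimate to turn the diameter hypothesis into the required dimension bound, and one must check that quasi-uniformity and shape-regularity keep all these constants independent of $h$ and $N$ along the nested refinement sequence. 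All of the genuine analytic work—the \emph{a priori} bound on $\limsup_{t\to\infty}\frac{1}{T}\int_t^{t+T}\|u\|^2\,d\tau$ and the generalized Gronwall argument—has already been done inside the proof of Theorem~\ref{theo:main1}, so no further PDE estimates are needed.
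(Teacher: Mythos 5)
Your proof is correct and takes essentially the same approach as the paper: apply Lemma~\ref{lemma:sz_vector} with $\alpha=0$, use the quasi-uniformity relation~(\ref{eqn:h_and_N}) in dimension $d=2$ to obtain the approximation inequality~(\ref{eqn:approximation}) with $\gamma=\tfrac12$, and then invoke Theorem~\ref{theo:main1}. Your extra bookkeeping---the upper estimate in~(\ref{eqn:h_and_N}) to get $h \lesssim N^{-1/2}$ for the approximation inequality, and the lower estimate to turn the hypothesis on $h^{-2}$ into the dimension threshold, with $C$ absorbing $C_0$, $C_0'$, and $|\Omega|$---is precisely what the paper's terser statement ``$h = O(N^{-1/2})$'' leaves implicit.
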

\begin{proof}
Since $h = O(N^{-1/2})$ for quasi-uniform, shape-regular 
triangulations in two dimensions,
taking $\alpha=0$ in Lemma~\ref{lemma:sz_vector} yields
$$
\|u-I_hu\|_{L^2(\Omega)} \le C_1 h | u |_{H^1(\Omega)}
                         \le \tilde{C}_1 N^{-1/2} \| u \|_{H^1(\Omega)}.
$$
Therefore, the SZ-interpolant $I_h$ satisfies the approximation 
inequality~(\ref{eqn:approximation}) for $\gamma = 1/2$.
The corollary then follows by application of Theorem~\ref{theo:main1}.
\end{proof}
\begin{remark}
If $f \in H$, then we have in fact a strong solution, 
i.e. $u \in H^2(\Omega)$, and the interpolation Lemma~\ref{lemma:sz_vector} 
may be applied with $\alpha=1$.
This falls into the theoretical framework of~\cite{CJT95a,CJT97}, and in
the periodic case they have shown that $N \approx Gr$, whereas the above
result for the no-slip case states that $N \approx Gr^2$.
Whether the no-slip case may be improved to $N \approx Gr$ with additional
regularity ($f \in H$) is unclear, due to the lack of an analogous
identity to
$$
(B(w,w), Aw) = 0,
$$
which holds for the two-dimensional periodic case.
In physical terms, in two dimensions this identity illustrates the lack of a 
boundary vorticity shedding source when the boundary is absent.
\end{remark}

\section{The Three-dimensional Navier-Stokes Equations}

The lack of appropriate {\em a priori} estimates in the three-dimensional case
requires a modification of the approach taken for the two-dimensional case
in the previous section.
However, the interpolation results we have employed are dimension-independent, 
and by following the analysis approach of~\cite{CDT95} very closely, we can 
obtain similar results for the three-dimensional case.
Again we require only that $f \in V'$, but we also assume the existence of a 
unique weak solution to the three-dimensional Navier-Stokes equations.
An additional technical assumption is that some measure of the mean rate
of energy dissipation be finite, namely:
$$
\epsilon_{\infty} = \inf_{T>0} \limsup_{t \rightarrow \infty}
   \frac{\nu}{T} \int_t^{t+T} \| \nabla u \|_{\infty} d\tau < \infty.
$$
This assumption implies that eventually the weak solution for the 
three-dimensional Navier-Stokes equations becomes unique, and also in the
case $f \in H$ the weak solution eventually becomes strong.
But this assumption does not imply anything about the transients, since the
quantity is required to be finite only for large time.
We assume again that $\Omega \subset \bbbb{R}^3$ is 
an open bounded domain with Lipschitz continuous boundary.
\begin{theorem}
   \label{theo:main2}
Let $f(t), g(t) \in V'$ be any two forcing functions satisfying
$$
\lim_{t \rightarrow \infty} \| f(t) - g(t) \|_{V'} = 0,
$$
and let $u, v \in V$ be the corresponding weak solutions 
to~(\ref{eqn:nv})--(\ref{eqn:nv_bc}) for $d=3$.
If there exists a projection operator 
$R_N : V \mapsto V_N \subset L^2(\Omega)$, $N = \dim(V_N)$,
satisfying
$$
\lim_{t \rightarrow \infty} 
    \| R_N( u(t) - v(t)) \|_{L^2(\Omega)} = 0,
$$
and satisfying for $\gamma>0$ the approximation inequality
$$
\| u - R_N u \|_{L^2(\Omega)} \le C_1 N^{-\gamma} \| u \|_{H^1(\Omega)},
$$
then
$$
\lim_{t \rightarrow \infty} | u(t) - v(t) | = 0
$$
holds if $N$ is such that
$$
\infty > N > C \left( \frac{1}{\nu} \inf_{T>0} \left\{
     \limsup_{t \rightarrow \infty} \frac{1}{T}
         \int_t^{t+T} \| \nabla u(s) \|_{L^\infty(\Omega)} ds
     \right\} \right)^{\frac{1}{2\gamma}},
$$
where $C$ is a constant independent of $\nu$, $f$, and $u$.
\end{theorem}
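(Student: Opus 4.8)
The plan is to mirror the two-dimensional argument of Theorem~\ref{theo:main1} as closely as possible, starting again from the difference equation~(\ref{eqn:weak_diff}) for $w = u-v$,
$$
\frac{1}{2}\frac{d}{dt}|w|^2 + \nu\|w\|^2 + b(w,u,w) = <f-g,w>.
$$
The essential difference lies in the treatment of the nonlinear term $b(w,u,w)$. In two dimensions the estimate~(\ref{eqn:lady_2d}) gives $|b(w,u,w)| \le 2^{1/2}\|u\|\,|w|\,\|w\|$, which a Young inequality absorbs cleanly into $\nu\|w\|^2$. The three-dimensional bound~(\ref{lemma:lady_3d}) instead produces a factor $\|w\|^{3/2}$, too strong to absorb into the viscous term while retaining a usable power of $|w|$; this is exactly why the global two-dimensional argument fails and why the mean-dissipation hypothesis $\epsilon_\infty < \infty$ is imposed.

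First I would discard the Ladyzhenskaya-type estimate entirely and instead invoke the H\"older bound~(\ref{eqn:l_infty}), which gives $|b(w,u,w)| \le \|\nabla u\|_{L^\infty(\Omega)}\,|w|^2$. Coupling this with Cauchy--Schwarz and Young on the right-hand side, $|<f-g,w>| \le \tfrac{1}{2\nu}\|f-g\|_{V'}^2 + \tfrac{\nu}{2}\|w\|^2$, moving the spare $\tfrac{\nu}{2}\|w\|^2$ to the left and multiplying through by $2$, I obtain
$$
\frac{d}{dt}|w|^2 + \nu\|w\|^2 \le 2\|\nabla u\|_{L^\infty(\Omega)}\,|w|^2 + \frac{1}{\nu}\|f-g\|_{V'}^2.
$$
Next, exactly as in the two-dimensional proof, I would feed in the consequence $|w|^2 \le 2C_1^2 N^{-2\gamma}\|w\|^2 + 2\|R_N w\|_{L^2(\Omega)}^2$ of the approximation inequality~(\ref{eqn:approximation}) to bound $\|w\|^2$ from below. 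This converts the estimate into the scalar form
$$
\frac{d}{dt}|w|^2 + \alpha(t)\,|w|^2 \le \beta(t),
$$
with $\alpha(t) = \tfrac{\nu N^{2\gamma}}{2C_1^2} - 2\|\nabla u(t)\|_{L^\infty(\Omega)}$ and $\beta(t) = \tfrac{1}{\nu}\|f-g\|_{V'}^2 + \tfrac{\nu N^{2\gamma}}{C_1^2}\|R_N w\|_{L^2(\Omega)}^2$.

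The final step is to apply the generalized Gronwall Lemma~\ref{lemma:gronwall_2}. Since $\beta \ge 0$ and both $\|f-g\|_{V'}\to 0$ and $\|R_N w\|_{L^2(\Omega)}\to 0$, the averaged limit of $\beta^+$ vanishes. Because $\alpha^-(t) \le 2\|\nabla u(t)\|_{L^\infty(\Omega)}$, the finiteness of $\limsup \tfrac{1}{T}\int_t^{t+T}\alpha^-$ follows from the dissipation hypothesis (finiteness of $\epsilon_\infty$) for any admissible $T$. The one decisive requirement is strict positivity of the averaged $\alpha$: for some fixed $T>0$ I need
$$
\frac{\nu N^{2\gamma}}{2C_1^2} > 2\,\limsup_{t\to\infty}\frac{1}{T}\int_t^{t+T}\|\nabla u(\tau)\|_{L^\infty(\Omega)}\,d\tau.
$$
Since $T$ is at my disposal, I would take the infimum over $T>0$, and this inequality reduces precisely to the lower bound on $N$ in the statement, with $C = (4C_1^2)^{1/(2\gamma)}$. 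Lemma~\ref{lemma:gronwall_2} then forces $|w(t)|\to 0$, i.e.\ $|u(t)-v(t)|\to 0$.

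The main obstacle is conceptual rather than computational: it is the replacement of the nonlinear estimate. One must recognize that, in three dimensions, no analog of the two-dimensional a priori bound $\limsup \tfrac{1}{T}\int_t^{t+T}\|u\|^2\,d\tau \le \tfrac{2}{\nu^2}\limsup\|f\|_{V'}^2$ is available, so the dissipation-driven decay mechanism can only be kept intact by paying for control of the nonlinearity with the pointwise gradient norm $\|\nabla u\|_{L^\infty(\Omega)}$, whose time average is finite exactly by the imposed mean rate-of-dissipation hypothesis $\epsilon_\infty < \infty$.
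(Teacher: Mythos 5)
Your proposal is correct and follows essentially the same route as the paper's own proof: starting from~(\ref{eqn:weak_diff}), bounding the nonlinear term via the H\"older estimate~(\ref{eqn:l_infty}), absorbing the forcing term by Cauchy--Schwarz and Young, inserting the consequence $|w|^2 \le 2C_1^2N^{-2\gamma}\|w\|^2 + 2\|R_Nw\|_{L^2(\Omega)}^2$ of the approximation inequality to obtain the scalar differential inequality, and closing with the generalized Gronwall Lemma~\ref{lemma:gronwall_2} together with a choice of $T$ realizing the infimum condition. The only deviations are cosmetic: you carry the correct factor $2$ on $\|\nabla u\|_{L^\infty(\Omega)}$ where the paper's displayed inequality drops it (harmlessly, since $C$ is unspecified), and you extract the witnessing $T$ directly from the strict inequality against the infimum, whereas the paper selects a $T_*$ whose time average is within a factor of two of the infimum---both arguments deliver the stated bound on $N$.
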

\begin{proof}
Beginning with equation~(\ref{eqn:weak_diff}), the 
inequality~(\ref{eqn:l_infty}) is employed along with Cauchy-Schwarz 
and Young's inequalities to yield
$$
\frac{1}{2} \frac{d}{dt}|w|^2 + \nu \| w \|^2
    \le \| \nabla u\|_{L^\infty(\Omega)} 
     |w|^2
    + \|f-g\|_{V'} \|w\|
$$
$$
\le  \|\nabla u\|_{L^{\infty}} |w|^2
   + \frac{1}{2 \nu} \|f - g \|_{V'}^2
   + \frac{\nu}{2} \|w\|^2
$$
Equivalently,
$$
\frac{d}{dt}|w|^2 
  + \nu \| w \|^2
  - \| \nabla u\|_{L^\infty(\Omega)} |w|^2
\le \frac{1}{\nu} \|f-g\|_{V'}^2.
$$
To bound the second term on the left from below, we employ 
a consequence of the approximation assumption on $R_N$, namely the inequality
$$
|w|^2 \le 2 N^{-2\gamma} C_1^2 \|w\|^2 + 2 \|R_N w\|_{L^2(\Omega)}^2,
$$
which yields
$$
\frac{d}{dt} |w|^2 
  +  \left( \frac{\nu N^{2\gamma}}{2 C_1^2} 
  -   \|\nabla u\|_{L^{\infty}} \right)
      |w|^2 
\le \frac{1}{\nu} \|f-g\|_{V'}^2
+ \frac{\nu N^{2\gamma}}{C_1^2} \|R_N w\|_{L^2(\Omega)}^2.
$$
This has the form
$$
\frac{d}{dt} |w|^2 + \alpha |w|^2 \le \beta,
$$
with again obvious definition of $\alpha$ and $\beta$.

The analysis now proceeds exactly as in the proof of Theorem~\ref{theo:main1},
so that all that remains is to check again that for some fixed $T > 0$,
$$
\limsup_{t \rightarrow \infty}
    \frac{1}{T} \int_{t}^{t+T} \alpha(\tau) d\tau >0.
$$
Thus, we must prove our assumption on $N$ guarantees for
a fixed $T>0$ that
\begin{equation}
   \label{eqn:key2}
N^{2\gamma}
> \frac{2 C_1^2}{\nu} 
\limsup_{t \rightarrow \infty} \frac{1}{T}
      \int_{t}^{t+T} \| \nabla u\|_{L^\infty(\Omega)} d\tau.
\end{equation}
If we select $T_* > 0$ such that
$$
2 \inf_{T>0} \left( 
      \limsup_{t \rightarrow \infty} \frac{1}{T}
      \int_t^{t+T} \| \nabla u(s) \|_{L^\infty(\Omega)} ds
     \right)
\ge
      \limsup_{t \rightarrow \infty} \frac{1}{T_*}
      \int_t^{t+T_*} \| \nabla u(s) \|_{L^\infty(\Omega)} ds,
$$
then our assumption gives
\begin{equation}
N^{2\gamma} 
> 
\frac{4 C_1^2}{\nu}
\inf_{T_*>0} \left( 
      \limsup_{t \rightarrow \infty} \frac{1}{T_*}
      \int_t^{t+T_*} \| \nabla u(s) \|_{L^\infty(\Omega)} ds
     \right)
\end{equation}
which implies~(\ref{eqn:key2}).
The theorem then follows by the Gronwall Lemma~\ref{lemma:gronwall_2}.
\end{proof}

Assume now that $\Omega \subset \bbbb{R}^3$ is also polyhedral, 
and can be exactly triangulated with a quasi-uniform, 
shape-regular set of simplices of maximal diameter 
$h = O (N^{-1/3})$, where $N$ is the number of vertices in the
triangulation.
As an application of the general three-dimensional result above, 
we will establish a lower bound on the simplex diameters of such a 
triangulation, which ensures that the SZ-interpolant is a determining 
projection
(and that the simplex surface integrals forming SZ-interpolant 
coefficients are a determining set of linear functionals).
\begin{corollary}
The SZ-interpolant is determining for the three-dimensional
Navier-Stokes equations if the diameter $h$ of the simplices is 
small enough so that
$$
\infty > h^{-2} > C \left( \frac{1}{\nu} \inf_{T>0} \left\{
     \limsup_{t \rightarrow \infty} \frac{1}{T}
         \int_t^{t+T} \| \nabla u(s) \|_{L^\infty(\Omega)} ds
     \right\} \right).
$$
\end{corollary}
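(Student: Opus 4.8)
The plan is to mirror the two-dimensional corollary exactly, now invoking the three-dimensional result Theorem~\ref{theo:main2} in place of its planar counterpart; since all of the analytic work has already been carried out in Theorem~\ref{theo:main2} and Lemma~\ref{lemma:sz_vector}, the corollary should follow by substitution together with careful exponent bookkeeping. First I would record the dimensional relationship between the mesh parameter and the number of vertices: for a quasi-uniform, shape-regular triangulation of $\Omega \subset \bbbb{R}^3$ one has $h = O(N^{-1/3})$, equivalently $N = O(h^{-3})$, as noted in~\S3.

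Next I would verify that the SZ-interpolant $I_h$ satisfies the approximation inequality~(\ref{eqn:approximation}) with the three-dimensional exponent. Applying Lemma~\ref{lemma:sz_vector} with $\alpha = 0$ and substituting $h = O(N^{-1/3})$ gives
$$
\|u - I_h u\|_{L^2(\Omega)} \le C_1 h \, | u |_{H^1(\Omega)} \le \tilde{C}_1 N^{-1/3} \| u \|_{H^1(\Omega)},
$$
so that $R_N = I_h$ is an admissible determining candidate with $\gamma = 1/3$. I would then feed $\gamma = 1/3$ into the sufficient condition of Theorem~\ref{theo:main2}. Because $1/(2\gamma) = 3/2$, the theorem guarantees that $I_h$ is determining whenever
$$
N > C \left( \frac{1}{\nu} \inf_{T>0} \left\{ \limsup_{t \rightarrow \infty} \frac{1}{T} \int_t^{t+T} \| \nabla u(s) \|_{L^\infty(\Omega)} \, ds \right\} \right)^{3/2}.
$$

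The final step converts this condition on $N$ into one on $h$: using $N = O(h^{-3})$ and absorbing constants into $C$, the displayed inequality becomes $h^{-3} > C(\cdots)^{3/2}$, and raising both sides to the power $2/3$ produces exactly the claimed bound $h^{-2} > C(\cdots)$. The finiteness constraint $\infty > N$ from Theorem~\ref{theo:main2} translates to $\infty > h^{-2}$, which is automatic since a genuine finite triangulation has $h>0$.

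The only real care needed --- and the place where the three-dimensional case diverges from the two-dimensional one --- is the exponent bookkeeping in this last conversion. In the planar setting one had $\gamma = 1/2$ and $N = O(h^{-2})$, which collapsed the powers cleanly; here the interplay among $\gamma = 1/3$, the exponent $1/(2\gamma) = 3/2$, and the scaling $N \sim h^{-3}$ must line up precisely to yield $h^{-2}$ rather than some other power of $h$. No new estimates are required beyond this substitution, so I do not anticipate any genuine analytic obstacle; the task is purely to track the exponents correctly.
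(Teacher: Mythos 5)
Your proposal is correct and follows essentially the same route as the paper's own proof: apply Lemma~\ref{lemma:sz_vector} with $\alpha=0$ to conclude that $I_h$ satisfies the approximation inequality~(\ref{eqn:approximation}) with $\gamma=1/3$, then invoke Theorem~\ref{theo:main2} and translate its condition on $N$ into one on $h$ via $N \sim h^{-3}$ from~(\ref{eqn:h_and_N}). The only difference is that the paper leaves the final exponent conversion ($h^{-3} > C(\cdots)^{3/2}$ becoming $h^{-2} > C(\cdots)$) implicit, whereas you carry it out explicitly, including the correct observation that the lower bound $N \ge C_0|\Omega|h^{-3}$ is what makes the substitution legitimate.
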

\begin{proof}
Since $h = O(N^{-1/3})$ for quasi-uniform, shape-regular 
triangulations in three dimensions,
taking $\alpha=0$ in Lemma~\ref{lemma:sz_vector} yields
$$
\|u-I_hu\|_{L^2(\Omega)} \le C_1 h | u |_{H^1(\Omega)}
                         \le \tilde{C}_1 N^{-1/3} \| u \|_{H^1(\Omega)}.
$$
Therefore, the SZ-interpolant $I_h$ satisfies the approximation 
inequality~(\ref{eqn:approximation}) for $\gamma = 1/3$.
The corollary then follows by application of Theorem~\ref{theo:main2}.
\end{proof}

\section*{Appendix: Approximability of the Scott-Zhang Interpolant}

We will sketch the proof of the approximability result for the 
SZ-interpolant given as Lemma~\ref{lemma:sz_vector}; we will follow
quite closely the proof given in~\cite{BrSc94,ScZh90}.
As throughout this paper, we assume that $\Omega \in \calg{C}^{0,1}$, and
that the given exact simplicial triangulation of $\Omega$ is both 
shape-regular and quasi-uniform.

The proof of Lemma~\ref{lemma:sz_vector} will follow easily from 
the following result (see the comments at the end of this appendix).
\begin{lemma}
   \label{lemma:sz_scalar}
For the SZ-interpolant of $u\in H_0^{1+\alpha}(\Omega)$, 
$\alpha \ge 0$, it holds that
$$
\|u-I_hu\|_{L^2(\Omega)} \le C_1 h^{1+\alpha} | u |_{H^{1+\alpha}(\Omega)}.
$$
\end{lemma}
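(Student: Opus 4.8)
The plan is to reduce the global estimate to a sum of local estimates on each simplex, prove those by combining polynomial invariance with the Bramble--Hilbert lemma, and dispose of the fractional range by interpolation. First I would record the two structural properties of $I_h$ on which everything rests. Recall that $I_h u = \sum_{i=1}^N \phi_i\, l_i(u)$ with $l_i(u) = \int_{\sigma_i} \psi_i u$, where $\sigma_i$ is a simplex or boundary face attached to the vertex $x_i$ and $\{\psi_i\}$ is the $L^2(\sigma_i)$-biorthogonal dual of the nodal basis restricted to $\sigma_i$. The first property is \emph{local polynomial invariance}: since $\psi_i$ is biorthogonal to the $\phi_j|_{\sigma_i}$ and affine functions lie in the span of those traces, $l_i(p) = p(x_i)$ for every affine $p$, whence $I_h p = p$. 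The second is a \emph{scaled local $L^2$-stability} estimate
\[
\|I_h v\|_{L^2(T)} \le C\left( \|v\|_{L^2(S_T)} + h\,|v|_{H^1(S_T)} \right),
\]
where $S_T$ denotes the (finite) macro-patch formed by the simplices whose dual sets $\sigma_i$ meet $T$. I would obtain this by mapping to a reference element, using that the dual and nodal bases scale like $\|\psi_i\|_{L^2(\sigma_i)} \sim h^{-(d-1)/2}$ and $\|\phi_i\|_{L^2(T)} \sim h^{d/2}$, together with a scaled trace inequality $\|v\|_{L^2(\sigma_i)} \le C(h^{-1/2}\|v\|_{L^2(T_i)} + h^{1/2}|v|_{H^1(T_i)})$; the trace step is exactly what makes $I_h$ well defined on $H^1$, since the face integrals are controlled without point values.

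Second, I would prove the local estimate. Fix a simplex $T$ and let $p$ be the degree-one averaged Taylor polynomial of $u$ over $S_T$. Using invariance $I_h p = p$ on $T$,
\[
\|u - I_h u\|_{L^2(T)} \le \|u-p\|_{L^2(S_T)} + C\left( \|u-p\|_{L^2(S_T)} + h\,|u-p|_{H^1(S_T)} \right).
\]
For integer $\alpha \in \{0,1\}$ the Bramble--Hilbert lemma on $S_T$ gives $\|u-p\|_{L^2(S_T)} \le C h^{1+\alpha}|u|_{H^{1+\alpha}(S_T)}$ and, since $p$ is affine, $h\,|u-p|_{H^1(S_T)} \le C h^{1+\alpha}|u|_{H^{1+\alpha}(S_T)}$; note that at $\alpha=0$ the second term is merely $h\,|u|_{H^1(S_T)}$, which is already the desired order, so no cancellation is needed. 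Hence $\|u - I_h u\|_{L^2(T)} \le C h^{1+\alpha}|u|_{H^{1+\alpha}(S_T)}$ for $\alpha = 0,1$.

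Third, I would assemble the global bound. Because the triangulation is shape-regular and quasi-uniform, each simplex lies in only a bounded number of the patches $S_T$, so squaring and summing the local estimates yields $\|u-I_hu\|_{L^2(\Omega)}^2 \le C h^{2(1+\alpha)} \sum_T |u|_{H^{1+\alpha}(S_T)}^2 \le C h^{2(1+\alpha)} |u|_{H^{1+\alpha}(\Omega)}^2$ for integer $\alpha$. The fractional range $0 < \alpha < 1$ --- which is where the genuine difficulty lies --- I would handle not by summing local seminorms (the Gagliardo double integral does not localize additively over overlapping patches, so the bounded-overlap argument breaks down) but by real interpolation: $I - I_h$ is bounded from $H^1$ to $L^2$ with norm $O(h)$ and from $H^2$ to $L^2$ with norm $O(h^2)$, and since $[H^1(\Omega), H^2(\Omega)]_{\alpha} = H^{1+\alpha}(\Omega)$, the operator is bounded from $H^{1+\alpha}$ to $L^2$ with norm $O(h^{1+\alpha})$. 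The main obstacle is thus the fractional-order bookkeeping; once the two integer endpoints are in hand, interpolation finishes the proof cleanly, and the homogeneous boundary condition $u\in H_0^{1+\alpha}(\Omega)$ is respected throughout by the Scott--Zhang choice of boundary faces $\sigma_i \subset \partial\Omega$.
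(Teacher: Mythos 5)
Your proposal is correct and follows essentially the same route as the paper's appendix proof: local polynomial invariance of $I_h$, the Scott--Zhang local stability bound (which you sketch via scaling and a scaled trace inequality, where the paper simply cites Theorem~3.1 of Scott--Zhang as Lemma~\ref{lemma:stability}), the Dupont--Scott averaged-Taylor/Bramble--Hilbert estimate on the patches $S_\tau$, summation using bounded overlap, and interpolation to cover fractional $\alpha$. The only quibble is your parenthetical claim that the bounded-overlap summation breaks down for the Gagliardo seminorm --- finite overlap does in fact give $\sum_{T} |u|_{H^{s}(S_T)}^2 \le C\, |u|_{H^{s}(\Omega)}^2$, since each pair $(x,y)$ lies in at most boundedly many patch products $S_T \times S_T$ --- but this remark is immaterial because you handle the fractional range by operator interpolation between the $H^1$ and $H^2$ endpoints, which is exactly the paper's concluding step.
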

To prove Lemma~\ref{lemma:sz_scalar}, we will begin by defining carefully the 
SZ-interpolant.
Let $\calg{T}_h = \{ \tau_i \}_{i=1}^L$ 
be the given quasi-uniform, shape-regular
mesh of $d$-simplices which exactly triangulate the underlying domain 
$\Omega$, and let $\Omega_h = \{ x_i \}_{i=1}^N$ be the set of vertices of 
these $d$-simplices.
Define
$$
V_h = \text{span} \{ \phi_i(x) \}_{i=1}^N \subset H^1(\Omega),
$$
where $\{ \phi_i(x) \}$ is the set of standard continuous piecewise linear 
(nodal) basis functions.
The nodal basis satisfies the Lagrange relationship at the vertices
(which are exactly the ``nodes'' in this setting):
$$
\phi_i(x_j) = \delta_{ij}.
$$
Now, for each vertex $x_i$, we select (arbitrarily) an associated 
$(d-1)$-simplex $\sigma_i$ from the given simplicial mesh satisfying only:
\begin{center}
(1) $x_i \in \bar{\sigma}_i$,
\ \ \ \ and \ \ \ \
(2) $\sigma_i \subset \partial \Omega$ if $x_i \in \partial \Omega$.
\end{center}
In other words, for a given vertex $x_i$ we pick an arbitrary $(d-1)$-simplex
from edges or faces of the $d$-simplices which contain $x_i$ as a vertex.
In two-dimensions, we are picking the edge of one of the triangles that
have $x_i$ as a vertex; in three-dimensions, we are picking the face of one
of the tetrahedra which have $x_i$ as a vertex.
The only restriction on this choice is near the boundary: if $x_i$ is on
the boundary, then the $(d-1)$-simplex we pick must be one of the edges or
faces of the a simplex which lies exactly on the boundary (such a choice is
always possible).

In each $(d-1)$-simplex $\sigma_i$, we number the generating vertex $x_i$
first in the set of vertices of $\sigma_i$, denoted $ \{ x_{i,j} \}_{j=1}^d$.
(I.e., we set $x_{i,1} = x_i$.)
For each $\sigma_i$, we also have a $(d-1)$-dimensional nodal basis
$\{ \phi_{i,j} \}_{j=1}^d$, where again we set $\phi_{i,1} = \phi_i$.
There exists an associated $L^2(\sigma_i)$-dual (bi-orthogonal) 
basis $\{\psi_{i,j}\}$ satisfying
$$
\int_{\sigma_i} \psi_{i,j}(x) \phi_{i,k}(x) dx = \delta_{jk},
\ \ \ \ j,k = 1,\ldots,d.
$$
Again we take $\psi_{i,1} = \psi_i, \ \forall x_i \in \Omega_h$.
Note that $\psi_i$ and $\phi_j$ also satisfy a bi-orthogonal relationship,
namely
$\int_{\sigma_i} \psi_i \phi_j dx = 0, \ i \ne j$.
We define now the SZ-interpolant as
$$
I_h : H^1(\Omega) \mapsto V_h(\Omega),
\ \ \ \ \ 
I_h u(x) = \sum_{i=1}^N \phi_i(x) l_i(u),
\ \ \ \ \ l_i(u) = \int_{\sigma_i} \psi_i(\xi) u(\xi) d\xi.
$$
Thanks to the Trace Theorem~\cite{Adam78}, 
the interpolant $I_h u(x)$ is well-defined
at nodal values even for $u \in H^1(\Omega)$, since 
$H^1(\Omega) \hookrightarrow L^2(\sigma_i)$.
Almost by construction, one can show~\cite{ScZh90} that
\begin{itemize}
\item $I_h : H^1(\Omega) \mapsto V_h(\Omega)$ is a projection
\item $I_h : H^1_0(\Omega) \mapsto V_{0h}(\Omega)$
\end{itemize}
where $V_{0h}$ is the subset of $V_h$ having zero trace on the boundary of
$\Omega$.
Thus, $I_h$ preserves homogeneous Dirichlet boundary conditions.
Using homogeneity arguments, the following stability result for the 
interpolant is established in~\cite{ScZh90}.
\begin{lemma}
   \label{lemma:stability}
For any $\tau \in \calg{T}_h$, if the support region of $\tau$ is defined as
the set
$S_{\tau} = \text{interior}~\left( \cup \{
          \bar{\tau}_i \ | \ \bar{\tau}_i \cap \bar{\tau} \ne \emptyset,
          \ \tau_i \in \calg{T}_h \} \right)$, then it holds that
$$
\| I_h u \|_{H^m(\tau)}
  \le C \sum_{k=0}^l h^{k-m} |u|_{H^k(S_{\tau})},
  \ \ 0 \le m \le l, \ \ l > 1/2.
$$
\end{lemma}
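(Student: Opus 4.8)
The plan is to reduce everything to a fixed reference configuration by an affine change of variables and then track the powers of $h$ produced by the scaling. Fix an element $\tau \in \calg{T}_h$ and let $F_\tau(\hat{x}) = B_\tau \hat{x} + b_\tau$ be the affine map carrying a reference simplex $\hat\tau$ onto $\tau$. Shape-regularity and quasi-uniformity give the standard bounds $\|B_\tau\| \le C h$, $\|B_\tau^{-1}\| \le C h^{-1}$, and $|\det B_\tau| \ge c\, h^d$, with constants independent of $\tau$ and $h$. For a function $v$ on $\tau$ and its pullback $\hat{v} = v \circ F_\tau$, the chain rule then yields the two-sided seminorm scaling
$$
|v|_{H^m(\tau)} \le C\, h^{\frac{d}{2}-m}\, |\hat{v}|_{H^m(\hat\tau)},
\qquad
|\hat{v}|_{H^k(\hat\tau)} \le C\, h^{k-\frac{d}{2}}\, |v|_{H^k(\tau)}.
$$
Because the support region $S_\tau$ is a union of a uniformly bounded number of shape-regular simplices (again by quasi-uniformity), the same map-and-scale argument applies on the whole patch, sending $S_\tau$ onto a reference patch $\hat{S}$ of fixed geometry, with the analogous seminorm bounds holding element by element.

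Next I would establish the key reference-element estimate: the interpolant pulled back to $\hat\tau$ is bounded from $H^l(\hat{S})$ into $H^m(\hat\tau)$ for every $l > 1/2$ and $0 \le m \le l$. Since the Scott-Zhang construction is affine covariant --- the nodal basis $\{\phi_i\}$ and the bi-orthogonal dual basis $\{\psi_{i,j}\}$ transform consistently under $F_\tau$ --- the pullback $\widehat{I_h u}$ equals the reference interpolant $\hat{I}\hat{u}$ built from the fixed reference data on $\hat{S}$. On $\hat\tau$ this interpolant is a polynomial of fixed degree determined by the finitely many coefficients $l_i(\hat{u}) = \int_{\hat\sigma_i} \hat\psi_i \hat{u}$; since all norms on this finite-dimensional polynomial space are equivalent, it suffices to bound these coefficients. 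By Cauchy--Schwarz and the fixed boundedness of the reference dual basis, $|l_i(\hat{u})| \le C \|\hat{u}\|_{L^2(\hat\sigma_i)}$, and the Trace Theorem $H^l(\hat{S}) \hookrightarrow L^2(\hat\sigma_i)$ --- valid precisely because $l > 1/2$ --- then gives $\|\hat{I}\hat{u}\|_{H^m(\hat\tau)} \le C \|\hat{u}\|_{H^l(\hat{S})}$.

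Finally, I would assemble the estimate by composing the two reductions. Combining the reference bound with the scaling relations,
$$
\|I_h u\|_{H^m(\tau)}
  \le C\, h^{\frac{d}{2}-m}\, \|\hat{I}\hat{u}\|_{H^m(\hat\tau)}
  \le C\, h^{\frac{d}{2}-m}\, \sum_{k=0}^{l} |\hat{u}|_{H^k(\hat{S})}
  \le C \sum_{k=0}^{l} h^{k-m}\, |u|_{H^k(S_\tau)},
$$
where in the last step each reference seminorm $|\hat{u}|_{H^k(\hat{S})}$ is scaled back by the factor $h^{k-\frac{d}{2}}$, summed over the uniformly many simplices of the patch so that the constant stays independent of $\tau$ and $h$. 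This is exactly the asserted inequality.

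The main obstacle is the affine-covariance step together with the correct scaling of the dual basis. Unlike nodal interpolation, the degrees of freedom here are surface integrals $l_i(u) = \int_{\sigma_i} \psi_i u$ over $(d-1)$-dimensional simplices, so the dual basis $\psi_i$ carries a measure factor scaling like $|\sigma_i|^{-1} \approx h^{-(d-1)}$, and one must verify that these boundary-measure factors combine with the trace scaling and the volume Jacobian to reproduce exactly the powers $h^{k-m}$ --- no more and no less. Care is also needed near $\partial\Omega$, where the prescription $\sigma_i \subset \partial\Omega$ for boundary vertices must be shown not to spoil the uniform reference geometry of the patch $\hat{S}$. The hypothesis $l > 1/2$ is not incidental: it is precisely the threshold at which the trace onto the $(d-1)$-dimensional $\sigma_i$ is a bounded operation, which is exactly what makes the surface-averaged functionals $l_i$ well-defined --- and hence the whole stability estimate available --- for merely $H^1$ (indeed $H^{1/2+\epsilon}$) functions.
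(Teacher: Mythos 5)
Your proposal is sound and, modulo one phrase, it is the same argument the paper relies on: the paper does not prove Lemma~\ref{lemma:stability} inline but simply cites Theorem~3.1 of~\cite{ScZh90}, and that theorem is proved by exactly the homogeneity/affine-scaling argument you outline. In particular, your two key observations --- that the $L^2(\sigma_i)$-dual basis carries a measure factor scaling like $|\sigma_i|^{-1}$ which cancels the surface Jacobian, so that $l_i(u)=\hat l_i(\hat u)$ is affine-invariant, and that $l>1/2$ is precisely the trace-theorem threshold making the functionals $l_i$ bounded --- are the two pillars of the Scott--Zhang proof.

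The one step that is false as literally written is ``sending $S_\tau$ onto a reference patch $\hat S$ of fixed geometry.'' The image $F_\tau^{-1}(S_\tau)$ under the single affine map attached to $\tau$ is not a fixed configuration: the number, shapes, and arrangement of the simplices neighboring $\tau$ vary with $\tau$ and with the mesh, so there is no one reference patch $\hat S$, and the constant in your reference-configuration estimate $\|\hat I\hat u\|_{H^m(\hat\tau)}\le C\|\hat u\|_{H^l(\hat S)}$ cannot be justified by appealing to a fixed $\hat S$. The repair is the element-wise version, which is what~\cite{ScZh90} actually does: each $\sigma_i$ associated with a vertex of $\tau$ is a face of some simplex $\tau_i\subset S_\tau$; map that single simplex onto the fixed reference simplex, apply the trace theorem $H^l(\hat\tau_i)\hookrightarrow L^2(\hat\sigma_i)$ there, and scale back, the uniformity of all constants coming from shape regularity and the uniformly bounded cardinality of the patch rather than from any fixed patch geometry. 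Since your final assembly is already carried out element by element and invokes exactly these uniformity mechanisms, this is a local repair rather than a collapse of the argument; but the ``fixed geometry'' claim should be removed, as stated it asserts something no quasi-uniform mesh family satisfies.
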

\begin{proof}
See the proof of Theorem~3.1 in~\cite{ScZh90}.
\end{proof}

The proof of the Scott and Zhang~\cite{ScZh90} approximation result
is as follows.
\begin{proof} (Lemma~\ref{lemma:sz_scalar})
Since $I_h$ is a projector from $H^1(\Omega)$ onto $V_h(\Omega)$, it follows
that on each element, $I_h$ is a projector from $H^1(\tau)$ onto
$\calg{P}_1(\tau)$, the space of linear polynomials over $\tau$.
Thus, $I_h p = p, \ \forall p \in \calg{P}_1(\tau)$, and employing also the
stability result in Lemma~\ref{lemma:stability} we have that
for $0 \le m \le k \le 2$,
$$
\|u - I_h u\|_{H^m(\tau)}
   \le \| u - p \|_{H^m(\tau)} + \| I_h(p - u) \|_{H^m(\tau)}
   \le C \sum_{k=0}^m h^{k-m} \| u - p \|_{H^k(S_{\tau})},
$$
where $S_{\tau}$ is the element support region surrounding $\tau$ as defined in
Lemma~\ref{lemma:stability}.
Employing the modified Bramble-Hilbert lemma developed in~\cite{DuSc80}
to estimate the terms of the sum gives
$$
\inf_{p \in \calg{P}_1(\tau)} \| u - p \|_{H^m(S_{\tau})}
   \le C h^{k-m} | u |_{H^k(S_{\tau})}, 
  \ \ 0 \le m \le k \le 2,
$$
where due to the assumptions about the domain and the mesh, the constant
$C$ depends only on the spatial dimension $d$.
Together with the equation above this is
$$
\| u -I_h u \|_{H^m(\tau)} \le C h^{k-m} | u |_{H^k(S_{\tau})}
  \ \ 0 \le m \le k \le 2.
$$
Since the set
$$
Q = \sup_{\tau \in \calg{T}_h}
\{ \text{card} \{ \tau \in \calg{T}_h | \tau \cap S_{\tau} \ne \emptyset \} \}
$$
is finite due to the quasi-uniformity and shape-regularity of the mesh, we
have finally that for $0 \le m \le k \le 2$, it holds that
$$
\| u - I_h u \|_{H^m(\Omega)}^2
= \sum_{\tau \in \calg{T}_h}
    \| u - I_h u \|_{H^m(\tau)}^2
\le C h^{2(k-m)} \| u \|_{H^k(\Omega)}^2.
$$
The result for non-integer exponents $k$ and $m$ follows by the usual
norm interpolation arguments between $L^2(\Omega)$ and $H^2(\Omega)$, 
which completes the proof.
\end{proof}

Lemma~\ref{lemma:sz_scalar} can be easily extended to the vector case, which 
provides finally the proof of Lemma~\ref{lemma:sz_vector}.
\begin{proof} (Lemma~\ref{lemma:sz_vector})
For $u \in \bold{H}_0^{1+\alpha}(\Omega) = ( H_0^{1+\alpha}(\Omega) )^d$,
we have that
$$
\| u - I_h u \|_{L^2}^2 
   = \sum_{i=1}^d \| u_i - I_h^{(i)} u_i \|_{L^2(\Omega)}
   \le C_1^2 h^{2(1+\alpha)} \sum_{i=1}^d | u_i |_{H^{1+\alpha}(\Omega)}^2,
$$
where $I_h^{(i)}$ denotes the scalar SZ-interpolant applied to $u_i$.
Thus,
$$
\| u - I_h u \|_{L^2(\Omega)} \le C_1 h^{1+\alpha} |u|_{H^{1+\alpha}(\Omega)}.
$$
\end{proof}

\bibliographystyle{abbrv}
\bibliography{m}


\vspace*{0.5cm}

\end{document}